\definecolor{greenish}{RGB}{27,158,119}
\definecolor{MyOrange}{RGB}{217,95,2}
\definecolor{MyPurple}{RGB}{117,112,179}
\theoremstyle{plain}
\newtheorem{thm}{Theorem}
\newtheorem{lem}[thm]{Lemma}
\newtheorem{cor}[thm]{Corollary}
\theoremstyle{definition}
\newtheorem{defn}[thm]{Definition}
\newcommand{\p}[1]{p_{(#1)}}
\title{A Note on Mixed Cages of Girth 5}
\author[1]{Gabriela Araujo-Pardo}
\author[2]{Lydia Mirabel Mendoza-Cadena}
\affil[1]{Instituto de Matemáticas, Universidad Nacional Autónoma de México, Campus Juriquilla, Querétaro, Mexico.}
\affil[2]{Center for Mathematical Modeling, Universidad de Chile, Santiago, Chile.}
\date{ }
\begin{document}

\maketitle

\begin{abstract}
    \medskip
    A \emph{mixed regular graph} is a graph where every vertex has $z$ incoming arcs, $z$ outgoing arcs, and $r$ edges; furthermore, if it has girth $g$, we say that the graph is a \emph{$[z,r;g]$-mixed graph}. A \emph{$[z,r;g]$-mixed cage} is a $[z,r;g]$-mixed graph with the smallest possible order.
    
    In this note, we give a family of $[z,q;5]$-mixed graphs for $q\geq 7$ power of prime and $q-1\leq 4z+R$ with $z\geq 1$ and $R \in \{1,\ldots,5\}$. This provides better upper bounds on the order of mixed cages until this moment. 
    
    \textbf{Keywords:} mixed cages; girth; projective plane; elliptic semiplane
\end{abstract}

\section{Introduction \label{sec:intro}}
A \emph{mixed graph} is defined as a simple\footnote{No parallel arcs or edges are permitted, and neither can there be a parallel arc and edge.} graph $G=(V;E\cup A)$ where $V(G)$ represents the set of vertices, $E(G)$ designates the set of edges, and $A(G)$ is the set of arcs. We denote an edge connecting $u$ and $v$ as $uv$ and an arc directed from $u$ to $v$ as $(u,v)$ to prevent ambiguity. We consider $u$ and $v$ to be \emph{edge-adjacent} (\emph{arc-adjacent}) if there is an edge (arc) connecting them. Walks, paths, and cycles are represented by a series of vertices $(v_0, v_1, \dots, v_n)$, where each pair $v_i$ and $v_{i+1}$ is linked by either the arc $(v_i,v_{i+1})$ or the edge $v_iv_{i+1}$. The length of the shortest cycle of the graph is called the \emph{girth}.

If every vertex has $z$ incoming arcs, $z$ outgoing arcs, and $r$ edges, the graph is referred to as a \emph{mixed regular graph}. Furthermore, if it has girth $g$, we say that the graph is a \emph{$[z,r;g]$-mixed graph}. A \emph{$[z,r;g]$-mixed cage} is a $[z,r;g]$-mixed graph that has the smallest possible order, where the \emph{order} of a graph is the number of vertices. The minimum order for $[z,r;g]$-mixed graphs is denoted as $n[z,r;g]$. These graphs were introduced by Araujo-Pardo,  Hernández-Cruz, and Montellano-Ballesteros~\cite{araujo2019mixed}.

For a given set of nodes $X \subseteq V$, the graph $G[X]$ arises from removing all the vertices $V \setminus X$ of $G$ with the arcs and edges adjacent to such set.

Recent work on mixed cages is due to Araujo-Pardo, Hernández-Cruz, and Montellano-Ballesteros~\cite{araujo2019mixed}, Araujo-Pardo, {De la Cruz}, and González-Moreno~\cite{araujo2022monotonicity}, Exoo~\cite{exoo2023mixed,exoo2023amixedgraphachieving}, Araujo-Pardo and Mendoza-Cadena~\cite{araujo2024mixedcagesgirth6}, and Jajcayov\'a and Jajcay ~\cite{jajcajova2024totallyregular}.

In this note we give a new family of mixed graphs of girth five that improves the upper bound given in \cite{araujo2022monotonicity}. In fact,  these families were built using the incidence graph of an elliptic semiplane of type $C$. On the other hand, the new families presented here use the incidence graph of an elliptic semiplane of type $L$. Similar constructions were given in \cite{abajo2019improving} to obtain regular graphs of girth $5$ that attain the minimum upper bounds given for regular graphs up to this moment. 

Our constructions are inspired by those provided in \cite{abajo2019improving}. In contrast, the previous constructions given in~\cite{araujo2022monotonicity} were based on the constructions outlined in \cite{araujo2012familiesofsmall}. Moreover, for $q$ power of prime and $q-1\leq 4z+R$ with $R=\{1,\ldots,5\}$, we construct $[z,q;5]$-graphs of order $2q^2-2$, while in \cite{araujo2022monotonicity} the authors construct $[z,q;5]$-graphs of order $2q^2$ only for $q$ prime (not prime power).

Finally, it is important to highlight the significance of the families discussed here, as well as the families constructed in \cite{araujo2024mixedcagesgirth6, araujo2022monotonicity}. We present an infinite family of graphs with small order. Notably, our graphs provide improved lower bounds for any prime power $q \geq 7$ and $z \geq 1$, surpassing previous results.
\section{Preliminaries \label{sec:preliminaries}}
To be self-contained, we describe in detail three graphs that are obtained from projective planes of order $q$. For an introduction, see e.g. \cite{balbuena2008incidence, van2001course, kiss2019finite, godsil2013algebraic}.

\paragraph{Incidence graph of the projective plane on the Galois field.}  Let $q$ be a prime power, and let $\mathbb{F}_q$ be the Galois field of order $q$. 
We describe the projective plane $PG(2; q)$.
Let $L_\infty$ and $P_\infty$ be the incident line and point in the projective plane, called the infinity line and infinity point, respectively.
The classes of lines (points)  are denoted by $L_i$ ($P_i$)  for $i \in \mathbb{F}_q$.
Each line $L$ has $q+1$ points and each point $P$ is incident to $q+1$ lines. 
Any line in $L_m$ described by $y = m x +b$  is denoted by $[m,b]$, and any point in $P_x$ is denoted by $(x,y)$. 
For each $i \in  \mathbb{F}_q$, the set of points incident to $L_i$ is $\{ (i,j) \mid j \in  \mathbb{F}_q\}$ and the set of lines incident to $P_i$ is $\{ [i,j] \mid j \in   \mathbb{F}_q \}$.
Finally, for $m,b \in  \mathbb{F}_q$ the line $[m,b]$ is incident to all points $(x,y)$ such that $y = m x +b$ holds for $x,y \in  \mathbb{F}_q$.

The projective plane $PG(2; q)$ has an incidence graph associated that we denote by $G_{(2,q)}$. Each line and point has a unique associated vertex $v \in V(G_{(2,q)}) $. 
Similarly, $uv \in E(G_{(2,q)})$ if and only if  their associated line and point are incident in  $PG(2; q)$. For an easy reading, we may refer to ``line $[m,b]$'' (point $(x,y)$) instead of ``vertex associated to the line $[m,b]$'' (point $(x,y)$). Note that $G_{(2,q)}$ has order $2q^2 + 2q + 2$, diameter 3 and girth 6. In fact, it is known that this incidence graph is a $[0,q;6]$-mixed cage (undirected cage) that attains the Moore bound.

Let us describe the incidence graph of an elliptic semiplane of type $L$. This graph is derived from the projective plane. We select a line and remove it along with all of its incident points. Next, we choose another point from the remaining points and delete it, along with all its incident lines. In particular, picking line $L_{0}$ and point $P_{0}$, we obtain the following graph.

\begin{defn}[Graph $G_q$]
    Let $\mathbb{F}_q^* =  \mathbb{F}_q \setminus \{ 0 \}$. 
    For a fixed $b,y \in \mathbb{F}_q$, let us denote the sets of vertices $\mathcal{L}_b \coloneq \{ [m,b] \mid m \in \mathbb{F}^*_q\}$,  $\mathcal{P}_y \coloneq \{ (x,y) \mid x \in \mathbb{F}^*_q \}$,  $\mathcal{L}_\infty \coloneq \{ L_i \mid i \in \mathbb{F}^*_q \}$, and $\mathcal{P}_\infty \coloneq \{ P_i \mid i\in \mathbb{F}^*_q\}$.
    Finally, we define $G_q \coloneq G_{(2,q)}[ \{ \mathcal{L}_b \mid b \in  \mathbb{F}_q\} \cup \{\mathcal{P}_y  \mid y \in \mathbb{F}_q \} \cup \{ \mathcal{L}_\infty , \mathcal{P}_\infty \}]$.
\end{defn}

This graph has $2(q-1)(q+1)$ nodes and girth 6.

\paragraph{Primitive element.}  A \emph{primitive element} $\xi \in \mathbb{F}_q^*$ is a multiplicative generator of the field $\mathbb{F}_q$, such that the non-zero elements of  $\mathbb{F}_q$ can be written as $\xi^i$ for some $i\in \{ 0, \dots, q-2 \} $, that is,  $ \mathbb{F}_q = \{  0, \xi^0, \xi^1, \xi^2, \dots, \xi^{q-2}\}$. This is due to the isomorphism between the multiplicative group $\mathbb{F}^*_q$ and the additive group $\mathbb{Z}_{q-1}$, as described by Abajo, Balbuena, Bendala, and Marcote~\cite{abajo2019improving}. For example, a primitive element of the Galois Field of order 8 is $\alpha$, and a primitive element of $\mathbb{F}_7$ is 2. See e.g.~\cite{jones1998elementary}.
 
\paragraph{The circulant digraph $\overset{\bm{\rightarrow}}{\bm{C}}_{\bm{q}} \bm{{(i_1, \dots, i_k)}}$.} 
Consider the field $\mathbb{Z}_q$ and $i_1, \dots, i_k \in \mathbb{Z}_q$. A \emph{circulant digraph} $\overset{\rightarrow}{C}_q(i_1, \dots, i_k)$ has a vertex $v_i$ for each $i \in \{ 0, 1, \dots, q -1 \}$. There exists an arc $(v_{a},v_{b})$ if and only if $b \equiv  a + i \mod q$ for some $i \in i_1, \dots, i_k$. It is known that if $q = z(g-1) + 1$, then $\overset{\rightarrow}{C}_q( 1, 2, \dots, z)$ is a $[z,0;g]$-mixed graph (see e.g.~\cite{behzad1970minimal, araujo2009dicages}). 

\section{A family of mixed graphs with girth 5\label{sec:family_girth_5}}
For the Galois Field $\mathbb{F}_q$ for $q\geq 7$ a prime power, we consider the multiplicative group. Let $\xi \in \mathbb{F}^*_q$ be a primitive element. Consider the graph $G_q$, which has girth 6 and $q^2 - 1$ nodes. We describe a mixed graph of girth 5 by adding a circulant to the vertices arranged according to their second coordinate.

For a fixed $y \in \mathbb{F}_q$, we add circulants to $\mathcal{P}_y$ and to $\mathcal{L}_\infty$,  where all the computations are made modulo $q$. For a jump of length $i$, we add arcs $\big( (x,y),(x\xi^i,y) \big)$ and $\big( L_x, L_{x\xi^i} \big)$ for each $x \in \mathbb{F}^*_q$. Similarly for lines and the infinity points, for fixed $b \in \mathbb{F}_q$, we add arcs $\big( [m,b],[m/\xi^i,b] \big)$ and $\big( P_m, P_{m/\xi^i} \big)$ for each $m \in \mathbb{F}^*_q$. In all cases, we add arcs for $i \in \{ 1,2, \dots, k\}$ where $k$ is such that $q-1 = 4k + R$ for some $1\leq R\leq 5$. Let us call this resulting graph $H_q$; see Figure~\ref{fig:example_all_circulants_q8} for an example.

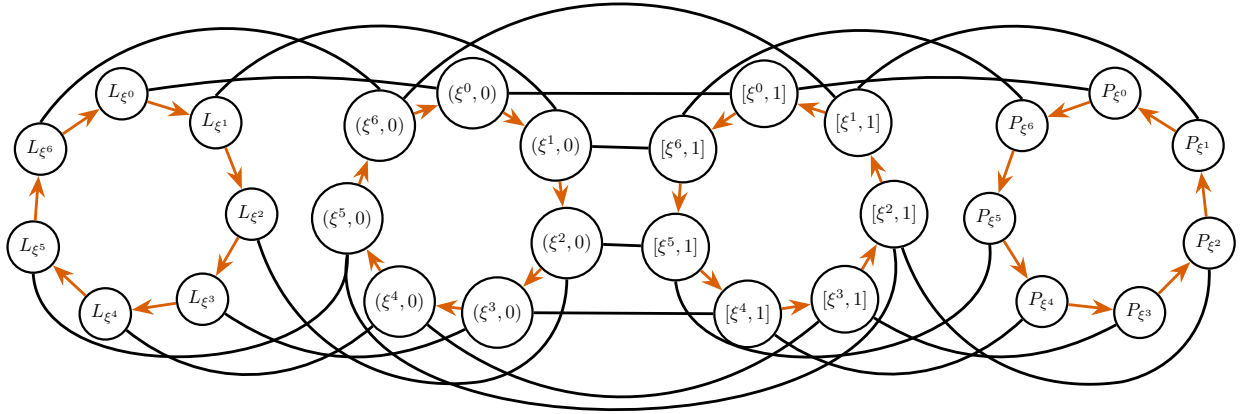
\begin{figure}
    \centering
    \resizebox{\textwidth}{!}{
    \begin{tikzpicture}[scale=.75,node distance=7em, 
        myArc/.style={draw,line width = 1.5pt, -{Stealth[length=4mm]},MyOrange}, 
        myEdge/.style={line width = 1.5pt},     
        stateColor/.style={circle,  minimum size=2.2em, draw, line width = 1.2pt}]   
        \node[minimum size={12em},regular polygon,regular polygon sides=7,rotate=45] at (1,1) (7-gonPOINTS) {};
        \foreach \p [count = \c from 0] in {7,...,1}{%
            \node[stateColor] at (7-gonPOINTS.corner \p) (point\c) {$(\xi^\c,0)$};};
        \foreach \i in {0, ..., 6}{
            \tikzmath{integer \j;
            \j = Mod(\i+1,7);}
            \draw[myArc] (point\i) to (point\j);
            }   
        \node[minimum size={12em},regular polygon,regular polygon sides=7,rotate=60] at (1+8,1) (7-gon) {};
        \foreach \p [count = \c from 0] in {7,...,1}{%
            \node[stateColor] at (7-gon.corner \p) (line\c) {$[\xi^\c,1]$};};
        \foreach \i in {0, ..., 6}{
            \tikzmath{integer \j;
            \j = Mod(\i-1,7);}
            \draw[myArc] (line\i) to (line\j);
            }   
    
        \node[minimum size={12em},regular polygon,regular polygon sides=7,rotate=45] at (1+16,1) (7-gonPOINTSinfty) {};
        \foreach \p [count = \c from 0] in {7,...,1}{%
            \node[stateColor] at (7-gonPOINTSinfty.corner \p) (inftyPoint\c) {$P_{\xi^\c}$};};
        \foreach \i in {0, ..., 6}{
            \tikzmath{integer \j;
            \j = Mod(\i-1,7);}
            \draw[myArc] (inftyPoint\i) to (inftyPoint\j);
            }

        \node[minimum size={12em},regular polygon,regular polygon sides=7,rotate=60] at (1-8,1) (7-goninfty) {};
        \foreach \p [count = \c from 0] in {7,...,1}{%
            \node[stateColor] at (7-goninfty.corner \p) (inftyLine\c) {$L_{\xi^\c}$};};
        \foreach \i in {0, ..., 6}{
            \tikzmath{integer \j;
            \j = Mod(\i+1,7);}
            \draw[myArc] (inftyLine\i) to (inftyLine\j);
            }
    
         \foreach \i \j \bending in {0/0/0,6/1/48,5.south/2.south/-100,4/3/-40,3/4/0,2/5/0,1/6/0}{
            \draw[myEdge, bend left=\bending] (point\i) to (line\j); 
            }
        \foreach \i \bending in {0/8,1.north/50,3/-30,4/-40,5.south/-90,6.north/50}{
            \draw[myEdge, bend left=\bending] (inftyLine\i) to (point\i); 
            \draw[myEdge, bend left=\bending] (line\i) to (inftyPoint\i);
            } 
            \draw[myEdge, bend left=-40] (inftyLine2) to ($(point4) + (2,-2)$)  to (point2.south); 
            \draw[myEdge, bend left=-40] (line2) to ($(inftyPoint4) + (2,-2)$)  to (inftyPoint2.south);
    \end{tikzpicture}
    
    }
    \caption{Some vertices and their adjacencies of graph $H_8$ from the Galois field of order 8. Here, $k= 1$ as $7 = 5 \times 1 +2$. The figure shows parts $\mathcal{L_\infty}, \mathcal{P}_0, \mathcal{L}_1$ and $\mathcal{P}_\infty$.}
    \label{fig:example_all_circulants_q8}
\end{figure}

Note that the circulants define a partition on the nodes as follows. Points are arranged according to their second coordinate, hence we have a partition with $2(q+1)$ parts, each part containing $q-1$ elements. That is, we have parts $\mathcal{P}_y$ for a fixed  $y \in \mathbb{F}_q$,  $\mathcal{L}_b$ for a fixed $b\in \mathbb{F}_q$, $\mathcal{L}_\infty$ and $\mathcal{P}_\infty$. Using this partition, it is not difficult to see that for each point $(x,y)$ (which belongs to part $\mathcal{P}_y$ for a fixed $y \in \mathbb{F}^*_q$), there exists a unique line $[\frac{y-b}{x},b]$ in part $\mathcal{L}_b$ for $b \in \mathbb{F}_q$; this is well-defined as $x\in \mathbb{F}^*_q$. Hence, for any fixed pair $y,b \in \mathbb{F}_q$ there is a matching between parts $\mathcal{P}_y$ and $\mathcal{L}_b$. The same hold for parts $\mathcal{P}_y$ for a fixed  $y \in \mathbb{F}_q$ and $\mathcal{L}_\infty$, and for parts $\mathcal{L}_b$ for a fixed $b\in \mathbb{F}_q$ and $\mathcal{P}_\infty$. Note that parts $\mathcal{L}_\infty$ and $\mathcal{P}_\infty$ share no edge.

\begin{lem}
    $H_q$ has girth 5.
\end{lem}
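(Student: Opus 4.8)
The plan is to establish the two bounds $\operatorname{girth}(H_q)\le 5$ and $\operatorname{girth}(H_q)\ge 5$ separately, and two structural facts will organize the argument. First, the edges of $H_q$ are exactly those of $G_q$, which is bipartite with colour classes ``points'' and ``lines'' and has girth $6$; every arc of $H_q$ lies inside one of the $2(q+1)$ parts $\mathcal{P}_y,\mathcal{L}_b,\mathcal{L}_\infty,\mathcal{P}_\infty$, and each such part is monochromatic, so arcs preserve the point/line colour. Hence along any closed walk the number of edges traversed is even. Second, writing each part in the logarithmic coordinate ($x=\xi^s$ for points, $m=\xi^s$ for lines, $s\in\mathbb{Z}_{q-1}$), each part carries the circulant $\overset{\rightarrow}{C}_{q-1}(1,\dots,k)$ on $\mathcal{P}_y$ and $\mathcal{L}_\infty$ and its reverse on $\mathcal{L}_b$ and $\mathcal{P}_\infty$, and a short computation shows that each $G_q$-matching between two parts becomes an affine bijection $s\mapsto\pm s+c$ of $\mathbb{Z}_{q-1}$: the identity for a pair $\mathcal{P}_y$--$\mathcal{L}_\infty$ or $\mathcal{L}_b$--$\mathcal{P}_\infty$, and a reflection $s\mapsto c-s$ for a pair $\mathcal{P}_y$--$\mathcal{L}_b$ with $b\ne y$. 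I will use throughout that $4k\le q-2<q-1$, which is exactly the hypothesis $q-1=4k+R$ with $R\ge 1$, and that $k\ge 1$.

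For $\operatorname{girth}(H_q)\ge 5$ I would rule out cycles of lengths $2,3,4$. A digon would force two jumps $i,i'\in\{1,\dots,k\}$ with $i+i'\equiv 0\pmod{q-1}$, impossible since $i+i'\le 2k<q-1$; and an arc cannot be parallel to an edge, as arcs are intra-part while edges are inter-part. Let $C$ be a cycle of length $3$ or $4$ with $e$ edges, $e$ even. If $e=4$ then $C$ is a $4$-cycle of $G_q$, excluded by $\operatorname{girth}(G_q)=6$. If $e=0$ then $C$ lies inside a single circulant and uses at most four arcs, forcing a sum of at most four elements of $\{1,\dots,k\}$ to be $\equiv 0\pmod{q-1}$; this is impossible since the sum lies in $\{1,\dots,4k\}$ and $4k<q-1$. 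If $e=2$, then up to rotation either the two edges are consecutive, or (only for length $4$) the steps alternate arc--edge--arc--edge. In the first case the single arc (length $3$) or the two consecutive arcs (length $4$) stay inside one part $X$, so the two ends of that arc-path are distinct vertices of $X$ sharing a common neighbour in $G_q$; but two distinct vertices of $\mathcal{P}_y$ (resp. $\mathcal{L}_\infty$, $\mathcal{L}_b$, $\mathcal{P}_\infty$) have a unique common neighbour in $PG(2;q)$, namely the line $[0,y]$ (resp. the point $P_\infty$, the point $(0,b)$, the line $L_\infty$), and each of these was deleted in forming $G_q$, so they have no common neighbour in $G_q$.

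The alternating arc--edge--arc--edge quadrilateral is where I expect the actual work to be. Write it $v_0\xrightarrow{\mathrm{arc}}v_1\xrightarrow{\mathrm{edge}}v_2\xrightarrow{\mathrm{arc}}v_3\xrightarrow{\mathrm{edge}}v_0$ with $v_0,v_1$ in a part $X$ and $v_2,v_3$ in a part $Y$; since $v_1v_2$ and $v_3v_0$ are edges, $\{X,Y\}$ must be a pair of parts joined by an edge of $G_q$, and by the description of $G_q$ such a pair is $\{\mathcal{P}_y,\mathcal{L}_b\}$ with $b\ne y$, or $\{\mathcal{P}_y,\mathcal{L}_\infty\}$, or $\{\mathcal{L}_b,\mathcal{P}_\infty\}$; rotating the cycle by two (which swaps the two arc-steps) one may assume $X$ is the first-named part. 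In each case I pass to logarithmic coordinates: the arc in $X$ changes the parameter by $+i_1$ or $-i_1$ according to whether $X$ carries the forward or the reverse circulant; the edge $v_1v_2$ applies one of the affine maps $s\mapsto\pm s+c$; the arc in $Y$ changes it by $\pm i_2$; and the edge $v_3v_0$ applies the inverse map. Composing the four maps around the cycle and inserting the signs that actually occur, the constant $c$ cancels and the closure condition collapses to $i_1+i_2\equiv 0\pmod{q-1}$ — impossible since $2\le i_1+i_2\le 2k<q-1$. The one delicate point is the bookkeeping: in each of the three part-types one must check that the combination of (forward/reverse circulant in $X$ and in $Y$) with (identity/reflection matching) indeed forces the two jumps to add rather than cancel; I expect all three to come out uniformly, but this is the step most prone to a sign error. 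This completes $\operatorname{girth}(H_q)\ge 5$.

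For $\operatorname{girth}(H_q)\le 5$ I would exhibit a $5$-cycle directly. Fix any $y\in\mathbb{F}_q$ and take $v_0=(1,y)$ together with its jump-$1$ arc-successor $v_1=(\xi,y)$ inside $\mathcal{P}_y$. Since $q\ge 7$, choose $x\in\mathbb{F}_q^*\setminus\{1,\xi\}$, and put $\ell_1=[1,\,y-1]$ (a line through $v_0$), $p=(x,\,x+y-1)$ (a point of $\ell_1$, which survives in $G_q$ because $x\ne 0$), and let $\ell_2$ be the line through $p$ and $v_1$; its slope equals $\frac{x-1}{x-\xi}$, which is nonzero (as $x\ne 1$) and finite (as $x\ne\xi$), so $\ell_2$ survives in $G_q$ and differs from $\ell_1$. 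A routine check shows $v_0,\ell_1,p,\ell_2,v_1$ are five distinct vertices and $v_0\,\ell_1\,p\,\ell_2\,v_1$ is a path of length $4$ in $G_q$; adjoining the arc $(v_0,v_1)$ yields a $5$-cycle of $H_q$. Hence $\operatorname{girth}(H_q)=5$.
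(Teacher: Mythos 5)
Your proof is correct and follows essentially the same route as the paper's: ruling out mixed $2$-, $3$-, and $4$-cycles via the girth of $G_q$, the circulant jump bounds, the uniqueness of a vertex's edge-neighbour within any other part, and the jump-cancellation computation for the alternating arc--edge quadrilateral, then exhibiting an explicit $5$-cycle. The sign bookkeeping you flag does come out uniformly: for $\{\mathcal{P}_y,\mathcal{L}_\infty\}$ and $\{\mathcal{L}_b,\mathcal{P}_\infty\}$ the two circulants run in the same logarithmic direction and the matching is the identity, while for $\{\mathcal{P}_y,\mathcal{L}_b\}$ the circulants run in opposite directions and the reflection $s\mapsto c-s$ reverses orientation, so in all three cases closure forces $i_1+i_2\equiv 0\pmod{q-1}$.
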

\begin{proof}
    First, note that as $H_q$ arose from the undirected graph $G_q$, then all undirected cycles have length at most 6. For fixed $y\in \mathbb{F}_q$, on each set of $(q-1)$ vertices we added a circulant $\overset{\rightarrow}{C}_{q-1}(1, \dots, k)$ for $q-1 = 4k + R$ for some $R \in \{ 1, \dots, 5\}$, and hence any directed cycle has length at least 5.
    We verify that any mixed cycle has a length at least 5.

    Based on the partition and the matching among parts discussed earlier, it is clear that no mixed cycles of lengths 2 and 3 exist.

    Suppose that there is a mixed cycle of length 4. We analyze all possible cases based on the sequence of lines and points that the cycle may consist of.
    Note that the case where it consists of a line, a point, a line and a point, is not possible as this can only happen with edges.
    If the mixed cycle consists on a point $(x,y)$ followed by three lines, then the lines should have the same second coordinate $b$ to have an arc among them, that is, the three lines belong to a part $\mathcal{L}_b$ for a fixed $b\in \mathbb{F}_q$ (or to the part $\mathcal{L}_\infty$). But then such a mixed cycle cannot exist as the point $(x,y)$ is only adjacent to exactly one of the lines by the matching between parts $\mathcal{P}_y$ and  $\mathcal{L}_b$ (or $\mathcal{L}_\infty$). Similarly, a point $P_m$ followed by three lines cannot exist.

    Let us analyze the case when the mixed cycle consists on two arc-adjacent points $(x,y)$ and $(x',y)$ followed by two -arc-adjacent lines $[m,b]$ and $[m',b]$, that is, the mixed cycle can be written as  $\big(  (x,mx+b), [m,b], [m/\xi^i,b], (x',(m/\xi^i)x' + b) \big)$ for some $i \in \{ 1, \dots, k\}$. As the two points are arc-adjacent, then their second coordinate is the same, and $mx+b = (m/\xi^i)x' + b $, or equivalently $x\xi^i = x'$. This implies that the four nodes do not form a cycle: there is an arc $([m,b], [m/\xi^i,b])$, and an arc $\big( (x,y), (x', y) \big)$, instead of $\big( (x',y), (x, y) \big)$.

    The case where the mixed cycle consists of two arc-adjacent points $(x,y)$ and $(x',y)$ followed by two -arc-adjacent lines $L_{x}$ and $L_{x'}$ cannot happen as if there is an arc $\big( (x,y), (x', y) \big)$, then there is an arc $\big( L_{x}, L_{x'} \big)$. Similarly, for two arc-adjacent points $P_m, P_{m'}$ and two arc-adjacent lines $[m,b]$ and $[m',b]$. By the case analysis, we conclude that there is no mixed cycle of length 4.

    Finally, we exhibit a mixed cycle of length 5. Consider $\big( (x,y), [m,b], (x/\xi^1,y'), (x,y'), L_{x} \big)$ for $x,m \in \mathbb{F}^*_q$, $y,y',b \in \mathbb{F}_q$, in particular, if $\mathbb{F}_q = \{ 0,1, \xi^1, \xi^2, \dots, \xi^{q-2} \}$, $\big( (1,0), [m,b], (\xi^{q-2},m\xi^{q-2} +b ), (1,m\xi^{q-2} +b), L_{1} \big)$.    
\end{proof}

\begin{cor}
    For all prime power $q$ and $k$ such that $q-1 = 4k + R$ for some $R\in \{ 1, \dots 5\}$, the graph $H_q$ is a $[k,q,5]$-mixed graph.
\end{cor}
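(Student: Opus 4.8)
The plan is to split the statement into the three defining conditions of a $[k,q;5]$-mixed graph. Girth $5$ is exactly the content of the previous Lemma, so it remains to show that in $H_q$ every vertex is incident to precisely $q$ edges, $k$ out-arcs, and $k$ in-arcs.

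For the edges, note that $H_q$ and $G_q$ share the same edge set, so it is enough to show $G_q$ is $q$-regular as an undirected graph; I would get this by tracking degrees through the two deletions producing $G_q$ from $G_{(2,q)}$, in which every vertex has degree $q+1$. Removing the line $L_0$ and its incident points strips the point $(0,b)$ off every slanted line $[m,b]$ and strips $P_\infty$ off every surviving vertical line and off $L_\infty$, while the affine points with nonzero abscissa and the infinity points $P_i$ lose nothing; the subsequent removal of the point $P_0$ and its incident lines then strips the line $[0,y]$ off every surviving affine point $(x,y)$ and strips $L_\infty$ off every surviving $P_i$, and does not touch the degrees of the lines that survive. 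Running this count through the four vertex types of $G_q$ shows each surviving vertex ends with degree exactly $q$.

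For the arcs, I would use the partition of $V(H_q)$ into $2(q+1)$ parts of size $q-1$ recalled before the Lemma. By construction the arcs added inside a given part are exactly a copy of $\overset{\rightarrow}{C}_{q-1}(1,\dots,k)$: identifying $\mathbb{F}_q^*$ with $\mathbb{Z}_{q-1}$ through the primitive element $\xi$ converts the maps $x\mapsto x\xi^i$ and $m\mapsto m/\xi^i$ into the shifts $a\mapsto a+i$ and $a\mapsto a-i$. In that circulant every vertex has out-degree $k$ and in-degree $k$, and since $q-1=4k+R\ge 4k+1$ we have $1\le i+j\le 2k<q-1$ for all $i,j\in\{1,\dots,k\}$, so no loops or digons appear. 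One must also check that adjoining these arcs to $G_q$ produces a legitimate mixed graph, with no arc parallel to an edge or to another arc; this is immediate, since distinct parts are vertex-disjoint and, $G_q$ being the bipartite incidence graph of an elliptic semiplane, each part consists entirely of lines or entirely of points and therefore carries no edge of $G_q$.

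Combining the three facts — girth $5$, exactly $q$ incident edges at each vertex, and exactly $k$ in-arcs and $k$ out-arcs at each vertex — yields that $H_q$ is a $[k,q;5]$-mixed graph. I do not expect any genuine obstacle: once the Lemma is invoked the argument is bookkeeping, and the only spots that need a little care are carrying out the degree count uniformly over the four vertex types and noting that the inequality $2k<q-1$ — which is precisely what the hypothesis $R\ge 1$ guarantees — is what prevents digons (and loops) in each added circulant.
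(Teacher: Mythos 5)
Your proof is correct and follows exactly the route the paper intends: the paper states this corollary without proof, treating it as immediate from the construction ($G_q$ is $q$-regular, each part carries a copy of $\overset{\rightarrow}{C}_{q-1}(1,\dots,k)$ giving $k$ in- and out-arcs per vertex) together with the preceding lemma for the girth. Your degree bookkeeping over the four vertex types and the observation that $2k<q-1$ rules out digons are exactly the routine verifications the authors left implicit.
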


\begin{cor}
    Let  $q$ be a prime power and let $k$ be a positive integer such that $q-1 = 4k + R$ for some $R\in \{ 1, \dots, 5\}$.
    Let $n[k,q;5]$ be the minimum order of a $n[k,q;5]$-mixed cage. Then, $n[k,q;5]\leq 2q^2-2$.
\end{cor}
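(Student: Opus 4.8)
The plan is to observe that this Corollary is essentially a bookkeeping consequence of the preceding Corollary together with a vertex count, so the proof should be short. First I would note that $H_q$ is obtained from $G_q$ \emph{only} by adding arcs — namely the directed circulants on each of the parts $\mathcal{P}_y$ ($y\in\mathbb{F}_q$), $\mathcal{L}_b$ ($b\in\mathbb{F}_q$), $\mathcal{L}_\infty$, and $\mathcal{P}_\infty$ — and no new vertices. Hence $V(H_q)=V(G_q)$, so the order of $H_q$ equals the order of $G_q$.

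Next I would count $|V(G_q)|$. By the definition of $G_q$ as the subgraph of $G_{(2,q)}$ induced on $\bigcup_{b\in\mathbb{F}_q}\mathcal{L}_b\cup\bigcup_{y\in\mathbb{F}_q}\mathcal{P}_y\cup\mathcal{L}_\infty\cup\mathcal{P}_\infty$, there are $2(q+1)$ pairwise disjoint parts, and each part has exactly $q-1$ vertices since every indexing parameter ranges over $\mathbb{F}_q^*$. Therefore $|V(H_q)|=|V(G_q)|=2(q+1)(q-1)=2q^2-2$, consistent with the remark that $G_q$ has $2(q-1)(q+1)$ nodes.

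Finally I would invoke the previous Corollary: for $q$ a prime power and $k$ with $q-1=4k+R$, $R\in\{1,\dots,5\}$, the graph $H_q$ is a $[k,q;5]$-mixed graph. Since $n[k,q;5]$ is by definition the minimum order among all $[k,q;5]$-mixed graphs, the existence of $H_q$ gives $n[k,q;5]\le |V(H_q)|=2q^2-2$, which is the claim. There is no real obstacle here — the substantive work (girth exactly $5$, and $z$-in/$z$-out/$q$-edge regularity) was already carried out in the Lemma and the first Corollary; the only thing to be careful about is matching the hypotheses on $q$, $k$, $R$ exactly to those of the previous Corollary, which they are.
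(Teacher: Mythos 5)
Your proof is correct and matches the paper's (implicit) argument: the corollary follows immediately from the preceding corollary together with the stated fact that $G_q$ (and hence $H_q$, which adds only arcs) has $2(q-1)(q+1)=2q^2-2$ vertices. The paper leaves this as an immediate consequence, and your bookkeeping fills in exactly the intended steps.
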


As we discussed earlier, the importance of these constructions lies in their ability to offer broad, general frameworks on the construction of mixed graphs of given girth. They serve as a foundation for understanding and applying various concepts effectively. To conclude, let us compare the upper bounds given in this paper with the lower bound for any $z\geq 1$ provided in \cite {araujo2024mixedcagesgirth6}. This lower bound is a generalization of the lower bound for  $z=1$, known as the AHM bound given in~\cite{araujo2019mixed}. We have the following inequalities: $q^2+q+4z+1\leq n[z,q;5] \leq 2q^2-2$ for $q-1=4k+R$ and $R\in \{ 1, \dots, 5\}$.

\subsubsection*{Acknowledgments}
G. Araujo-Pardo was supported by PAPIIT-UNAM-M{\' e}xico IN113324.

G. Araujo-Pardo and L.M. Mendoza-Cadena was supported CONAHCyT: CBF2023-2020-552 M{\' e}xico.

L. M. Mendoza-Cadena was supported by the Lend\"ulet Programme of the Hungarian Academy of Sciences -- grant number LP2021-1/2021, by the Ministry of Innovation and Technology of Hungary from the National Research, Development and Innovation Fund -- grant ELTE TKP 2021-NKTA-62.


\bibliographystyle{abbrv}
\bibliography{cages}
\end{document}